\documentclass[a4paper,12pt]{article}
\usepackage{amsmath}
\usepackage{amssymb}
\usepackage{amsthm} 
\usepackage{booktabs}

\usepackage{tikz} 
\usetikzlibrary{arrows.meta,positioning}

\theoremstyle{plain}
\newtheorem{theorem}{Theorem}
\newtheorem{proposition}[theorem]{Proposition}

\newtheorem{lemma}[theorem]{Lemma}
\newtheorem{claim}[theorem]{Claim}
\newtheorem{example}[theorem]{Example}

\theoremstyle{definition}
\newtheorem{definition}[theorem]{Definition}

\date{July 30, 2026}
\author{Hiroki Kodama}
\title{
Structural Oscillatority Criterion of Boolean Networks 
}

\begin{document}

\maketitle

\begin{abstract}
Azuma et al.\ showed that a cactus-expandable Boolean network $\Sigma$ is structurally oscillatory if all the simple cycles of $\Sigma$ contain an odd number of inhibiting edges.
We show that we do not need the cactus-expandable condition. Namely, a strongly connected Boolean network $\Sigma$ is structurally oscillatory if and only if all the simple cycles of $\Sigma$ contain an odd number of inhibiting edges.
Additionally, we provide a characterization of a structurally oscillatory Boolean network for the general case.
\end{abstract}

\section{Introduction and Notation}

\subsection{Boolean dynamics}

We call $\{0, 1\}$ a Boolean domain, where $0$ and $1$ correspond to \textit{false} and \textit{true} respectively.
We define operators $\bar{\quad}$, $\vee$, and $\wedge$, that correspond to \textit{NOT}, \textit{OR}, and \textit{AND} respectively.
$$
\bar{0} =1, \bar{1}=0; 
$$
$$
0 \vee 0 = 0, 0 \vee 1 = 1 \vee 0 = 1 \vee 1 = 1; 
$$
$$
0 \wedge 0 = 0 \wedge 1 = 1 \wedge 0 = 0, 1 \wedge 1 = 1.
$$

We also endow $\{0, 1\}$ with an order $0<1$.

Fix a positive integer $N$ and consider a function $\varphi \colon \{0, 1\}^N \to \{0, 1\}, (x_1,\dots,x_N) \mapsto \varphi(x_1,\dots,x_N)$.

$\varphi$ is said to be independent of $x_i$, weakly increasing in $x_i$, or weakly decreasing in $x_i$ respectively if 
$$\varphi(x_1,\dots,x_{i-1},0,x_{i+1},\dots,x_N)=    \varphi(x_1,\dots,x_{i-1},1,x_{i+1},\dots,x_N),$$
$$\varphi(x_1,\dots,x_{i-1},0,x_{i+1},\dots,x_N)\leq \varphi(x_1,\dots,x_{i-1},1,x_{i+1},\dots,x_N),$$
$$\varphi(x_1,\dots,x_{i-1},0,x_{i+1},\dots,x_N)\geq \varphi(x_1,\dots,x_{i-1},1,x_{i+1},\dots,x_N)$$
for any $(x_1,\dots,x_{i-1},x_{i+1},\dots,x_N)\in\{0, 1\}^{N-1}$ respectively.

$\varphi$ is said to be increasing in $x_i$, or decreasing in $x_i$ respectively if 
$\varphi$ is not independent of $x_i$ and is weakly increasing in $x_i$, or weakly decreasing in $x_i$ respectively.

We say that $\varphi$ is monotone if for any $i \in \{1,\dots,N\}$, $\varphi$ is independent of $x_i$, increasing in $x_i$, or decreasing in $x_i$ .

\bigskip

Suppose $f \colon \{0, 1\}^N \to \{0, 1\}^N$ is a map. Then $f$ defines a discrete dynamical system on 
$\{0, 1\}^N$ by $x(t+1):=f(x(t))$. In this paper, we only consider the case that for any $j \in \{1,\dots,N\}$,
$f_j=\pi_j\circ f$ is monotone, where $\pi_j\colon \{0, 1\}^N \to \{0, 1\}$ is the $j$-th projection.
We call such a dynamical system a Boolean dynamical system.

We are now going to visualize Boolean dynamical systems by digraphs. Before that, we review the digraphs.

\subsection{Digraphs}

A digraph, or directed graph, is an ordered pair $ G = (V,A)$,
where $V$ is a finite set of vertices and $A$ is a finite set of ordered pairs (called arcs) of vertices of $V$, in other words, $A\subset V \times V$.
In this paper, we allow loops, i.e.\ $(v,v)$ can be an arc.

For an arc $a=(v,w)$, we use the notation that $s(a)=v$ and $t(a)=w$. We also say that
$a$ is an arc from $v$ to $w$.
For a vertex $v \in V$, 
the indegree $\delta^-(v)$ is the number of arcs to $v$ and
the outdegree $\delta^+(v)$ is the number of arcs from $v$.

A path of length $n \geq 1$ from $v$ to $w$ is a sequence of $n+1$ vertices $(v_0,v_1,\dots,v_{n-1},v_n)$ such that
$v_0=v$, $v_n=w$ and $(v_{i-1},v_i)\in A$ for $1 \leq i \leq n$.
A path is called a simple path if $v_0, \dots, v_{n-1}, v_n$ are distinct.

A cycle is a path with $v_0 = v_n$.
A cycle is called a simple cycle if $v_0, \dots, v_{n-1}$ are distinct. 
Note that a simple cycle cannot be a simple path.

A digraph $G=(V,A)$ is said to be strongly connected if for any vertices $v,w \in V$ there exists a path from $v$ to $w$. 

\subsection{Boolean networks}

Suppose a Boolean dynamical system is defined by a map $f \colon \{0, 1\}^N \to \{0, 1\}^N$.
The Boolean network $\Sigma = (V, A^+, A^-)$ of $f$ is constructed in the following way;
\begin{enumerate}
\item $V = \{1, \dots, N \}$ is an $N$-point set.
\item $(V,A)$ is a digraph where $A = A^+ \sqcup A^-$.
\item $(i,j) \not \in A$ if $f_j$ is independent of $x_i$.
\item $(i,j) \in A^+$ if $f_j$ is increasing in $x_i$.
\item $(i,j) \in A^-$ if $f_j$ is decreasing in $x_i$.
\end{enumerate}  

An element $(i,j) \in A^+$ is called an activating edge, and is denoted by $ i \to j$.

An element $(i,j) \in A^-$ is called an inhibiting edge, and is denoted by $ i \dashv j$.

\begin{claim}
If $V = \{1, \dots, N \}$, $(V,A)$ is a digraph, and $A = A^+ \sqcup A^-$ is a partition of $A$, then
there exists a map $f \colon \{0, 1\}^N \to \{0, 1\}^N$ of which Boolean network is $(V, A^+, A^-)$.

On the other hand, even if $f$ and $f'$ have the same Boolean network $(V, A^+, A^-)$, $f$ may differ from $f'$.
\end{claim}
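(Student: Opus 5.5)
Both parts can be settled by explicit construction.

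\emph{Existence.} For each $j \in V$ we define $f_j$ separately, since the Boolean network only records, for each $j$, how $f_j$ depends on each $x_i$. Let $P_j = \{ i : (i,j) \in A^+\}$ and $M_j = \{ i : (i,j) \in A^-\}$; these are disjoint because $A = A^+ \sqcup A^-$. If $P_j \cup M_j = \emptyset$, set $f_j \equiv 0$; it is independent of every variable, so $j$ has no incoming arcs, as required. Otherwise set
$$
f_j(x) = \bigvee_{i \in P_j} x_i \;\vee\; \bigvee_{i \in M_j} \bar{x}_i .
$$

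\emph{Checking the construction.} We must check, for each $i$, that $f_j$ is independent of $x_i$, increasing, or decreasing exactly as the arc data prescribe; this also shows $f_j$ is monotone.
\begin{enumerate}
\item If $i \notin P_j \cup M_j$, then $x_i$ does not appear in the formula, so $f_j$ is independent of $x_i$.
\item If $i \in P_j$, then $f_j$ is weakly increasing in $x_i$, because $\vee$ is weakly increasing in each argument. To see that $f_j$ is not independent of $x_i$, choose the other variables so that every other literal is $0$: put $x_k = 0$ for $k \in P_j \setminus\{i\}$ and $x_k = 1$ for $k \in M_j$. Then $f_j$ equals $x_i$, so it genuinely depends on $x_i$.
\item If $i \in M_j$, the symmetric choice of the other variables makes $f_j = \bar{x}_i$. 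Hence $f_j$ is decreasing, not merely weakly decreasing, in $x_i$.
\end{enumerate}
So the Boolean network of $f$ is exactly $(V, A^+, A^-)$.

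\emph{Non-uniqueness.} A single example suffices. Take $N = 2$, $A^+ = \{(1,1),(2,1)\}$ and $A^- = \emptyset$, with $f_2 \equiv 0$ in both maps. Let $f_1 = x_1 \vee x_2$ and $f'_1 = x_1 \wedge x_2$. Both are increasing in $x_1$ and in $x_2$, so $f$ and $f'$ have the same Boolean network. However, $f(1,0) = (1,0) \neq (0,0) = f'(1,0)$, so $f \neq f'$.

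The only step needing any care is the ``not independent'' requirement in the existence part. It is handled by choosing the remaining variables so that all other literals vanish, which isolates the literal $x_i$ or $\bar{x}_i$.
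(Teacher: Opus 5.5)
Your proof is correct and follows the paper's approach. The paper settles non-uniqueness with Example~\ref{example}, which likewise contrasts an OR with an AND at one vertex, and existence rests on the same OR/AND-of-literals construction the paper later uses in the proof of Proposition~\ref{onlyif}; your check that each literal can be isolated (so $f_j$ is genuinely increasing or decreasing) and your handling of vertices with no incoming arcs fill in details the paper leaves implicit.
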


\begin{example}\label{example}
The following two distinct Boolean dynamical systems $f$ and $f'$ on $\{0,1\}^4$ have the same Boolean network $\Sigma$.

\[
\begin{cases}
f_1(x) = \bar{x_4} & {} \\
f_2(x) = x_1 & {} \\
f_3(x) = x_1 \vee \bar{x_2}& {} \\
f_4(x) = x_3 & 
\end{cases}
, \quad
\begin{cases}
f'_1(x) = \bar{x_4} & {} \\
f'_2(x) = x_1 & {} \\
f'_3(x) = x_1 \wedge \bar{x_2}& {} \\
f'_4(x) = x_3 & 
\end{cases}
\]

\end{example}

\begin{figure}[htbp]
\centering
\begin{tikzpicture}[
  vertex/.style={circle, draw, minimum size=5mm, inner sep=0pt},
  act/.style={-{Stealth}, thick},
  inh/.style={thick, -{Bar[width=7pt]}}
]
  \node[vertex] (v1) at (1,0) {$1$};
  \node[vertex] (v2) at (0,1) {$2$};
  \node[vertex] (v3) at (-1,0) {$3$};
  \node[vertex] (v4) at (0,-1) {$4$};

  \draw[act] (v1) -- (v2);
  \draw[act] (v1) -- (v3);

  \draw[inh] (v2) -- (v3);

  \draw[act] (v3) -- (v4);

  \draw[inh] (v4) -- (v1);

\end{tikzpicture}
\caption{The Boolean network $\Sigma$ in Example \ref{example}.}
\label{fig:example-network}
\end{figure}


Hereafter, graph-theoretic terminology for digraphs will also be used for Boolean networks. For example, $\Sigma$ is said to be strongly connected if its underlying digraph $G$ is strongly connected.

\section{The Strongly Connected Case}

\subsection{Characterization Theorem for Strongly Connected Case}

A Boolean dynamical system $f \colon \{0,1\}^N \to \{0,1\}^N$ is said to be oscillatory 
if it has no fixed point $x \in  \{0,1\}^N$.
A Boolean network $\Sigma$ is said to be structurally oscillatory 
if any Boolean dynamical system $f$ represented by $\Sigma$ is oscillatory.

Then, the main theorem in \cite{Az2} can be restated in the following way:

\begin{theorem}
Suppose that $\Sigma = (V, A^+, A^-)$ is a Boolean network.

\noindent (1) Suppose digraph $G = (V, A^+ \sqcup A^-)$ is cactus.
Then $\Sigma$ is structurally oscillatory if and only if any simple cycle of $G$ has an odd number of inhibiting edges.

\noindent (2) Suppose digraph $G = (V, A^+ \sqcup A^-)$ is cactus-expandable.
Then $\Sigma$ is structurally oscillatory if any simple cycle of $G$ has an odd number of inhibiting edges.
\end{theorem}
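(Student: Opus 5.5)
The plan is to prove the sufficiency direction (common to (1) and (2)) through a ``supporting arc'' argument that needs no cactus hypothesis. For necessity in (1), I would construct an explicit dynamical system $f$ with a fixed point. For this I assume the standard definitions from \cite{Az2}, where a cactus (and a cactus-expandable digraph) is in particular strongly connected. Hence every vertex $j$ has $\delta^-(j)\ge 1$, and every $f_j$ is non-constant.

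\emph{Sufficiency.} Suppose every simple cycle has an odd number of inhibiting edges, and suppose for contradiction that $f$ has a fixed point $x$. Call an arc $(i,j)$ \emph{consistent at $x$} if either $(i,j)\in A^+$ and $x_i=x_j$, or $(i,j)\in A^-$ and $x_i\neq x_j$.

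The key claim is that every vertex $j$ has a consistent incoming arc. Suppose not, and say $x_j=1$. Then every input $x_i$ with $(i,j)\in A$ sits at the value that makes $f_j$ smallest: $x_i=0$ if $f_j$ is increasing in $x_i$, and $x_i=1$ if it is decreasing in $x_i$. By monotonicity in each coordinate separately, $f_j(x)$ is then the minimum of $f_j$ over $\{0,1\}^N$. Since $f_j(x)=x_j=1$, this forces $f_j\equiv 1$, contradicting $\delta^-(j)\ge 1$. The case $x_j=0$ is symmetric.

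Now follow consistent arcs backwards from any vertex. By finiteness some vertex repeats, which yields a simple cycle made entirely of consistent arcs. Going once around this cycle, the value of $x$ flips exactly at the inhibiting arcs and returns to its starting value. So the cycle has an even number of inhibiting edges, a contradiction. This proves (2), and the ``if'' half of (1), and in fact it works for any strongly connected $\Sigma$.

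\emph{Necessity in (1).} Suppose some simple cycle $C$ has an even number of inhibiting edges. I would build $f$ represented by $\Sigma$ that has a fixed point.

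First, assign $x$ on $C$ by choosing $x$ at one vertex and propagating around $C$. The assignment flips across inhibiting arcs and not across activating ones, and evenness makes it close up consistently. Next, since $G$ is strongly connected, extend $x$ to all of $V$ along an out-branching rooted at $C$, so that every vertex $j\notin C$ has a designated tree arc into it. Propagate values so that every designated arc (the $C$-arcs and the tree arcs) is consistent.

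Then define each $f_j$ from literals: the literal for $(i,j)$ is $x_i$ if $(i,j)\in A^+$ and $\bar{x_i}$ if $(i,j)\in A^-$. Let $f_j$ be the OR of the literals of all arcs into $j$ if $x_j=1$, and their AND if $x_j=0$. Such an $f_j$ is increasing or decreasing in exactly the prescribed variables, so $f$ is represented by $\Sigma$. The designated consistent arc into $j$ supplies a literal equal to $x_j$, and this literal alone decides the OR or AND. Hence $f_j(x)=x_j$ for every $j$, so $x$ is a fixed point and $\Sigma$ is not structurally oscillatory.

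The main obstacle is the key claim in the sufficiency argument, namely that a fixed point forces a consistent incoming arc at every vertex. It relies on each $f_j$ being monotone with a fixed sign in each variable, so that the ``all-unfavourable'' input is a global extremum of $f_j$. That is exactly where monotonicity enters. The rest, and the whole necessity construction, is routine bookkeeping, and as noted neither part needs the cactus hypothesis beyond strong connectivity.
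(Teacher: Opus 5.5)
Your proof is correct and takes essentially the paper's route: you obtain the statement from the strongly connected characterization (Theorem~\ref{main}), using the lemma that a fixed point forces an admissible incoming edge at every vertex, then backward tracing to an even cycle, for sufficiency, and the spanning bonsai on $C$ with the AND/OR construction for necessity. As in the paper, the cactus hypotheses enter only through strong connectivity; the one step you assert without proof, the existence of the out-branching rooted at $C$, is exactly the paper's spanning-bonsai lemma.
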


Definition of cactus and cactus-expandable digraphs can be found in \cite{Az1}, \cite{Az2}, and \cite{K}.
Here, we simply remark that they are stronger than being strongly connected.

The following is the main theorem of this section.

\begin{theorem}\label{main}
Suppose that $\Sigma = (V, A^+, A^-)$ is a Boolean network and the digraph $G = (V, A^+ \sqcup A^-)$ is strongly connected.
Then $\Sigma$ is structurally oscillatory if and only if any simple cycle of $G$ has an odd number of inhibiting edges.
\end{theorem}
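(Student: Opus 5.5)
We prove the two directions separately, and in both we exploit the fact that a strongly connected signed digraph in which every simple cycle is odd (contains an odd number of inhibiting edges) is highly constrained.

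\emph{Necessity.} Suppose some simple cycle $C=(v_0,\dots,v_n=v_0)$ of $G$ has an even number of inhibiting edges. We construct a Boolean dynamical system $f$ represented by $\Sigma$ that has a fixed point. First choose $x$ on $C$ consistently: set $x_{v_0}=0$ and propagate along $C$, with $x_{v_i}=x_{v_{i-1}}$ across an activating edge and $x_{v_i}=\overline{x_{v_{i-1}}}$ across an inhibiting edge. Evenness makes this well defined when we return to $v_0$. Then extend $x$ to the remaining vertices, for example by ordering them along a BFS towards $C$ using the reverse arcs. For each vertex $j$ we must define a monotone $f_j$ with exactly the prescribed signed dependencies and with $f_j(x)=x_j$. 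For $j=v_i$ on $C$ we let $f_j$ copy the literal of its $C$-predecessor, combined with the other inputs by $\vee$ or $\wedge$ so that those inputs cannot change the value at $x$. Concretely, if the $C$-literal has value $1$ at $x$ we use $\vee$; otherwise we use $\wedge$ and make the other literals evaluate to $1$ at $x$ by choosing OR or AND accordingly. Such combinations still depend genuinely on every input. For a vertex $j$ off $C$ we need some literal that evaluates to the desired $x_j$, and we can make this hold by choosing $x_j$ to equal the value of one in-neighbour's literal, processing vertices so that the chosen in-neighbour already has its value. Strong connectivity guarantees that every vertex has an in-neighbour, and the value is forced through that one literal with the same $\vee/\wedge$ trick. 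Note that $f_j=\ell\vee(\text{rest})$ with $\ell(x)=1$ gives value $1$, and $f_j=\ell\wedge(\text{rest})$ with $\ell(x)=0$ gives value $0$. So we simply pick $\vee$ or $\wedge$ according to the value of $\ell$ at $x$, and the other literals are then irrelevant at $x$. This yields a fixed point. A general monotone combination that depends on all inputs, such as a single $\vee$ or $\wedge$ of literals, realizes the required edge signs.

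\emph{Sufficiency.} Assume every simple cycle is odd. The key combinatorial step is a lemma: in a strongly connected signed digraph all of whose simple cycles are odd, every vertex $v$ has indegree $1$ or the structure forces a contradiction. More precisely, I would prove that for any two distinct arcs $(u,j),(u',j)$ into the same vertex $j$, one can find two simple cycles through $j$ whose sign parities relate via a third cycle to give an even cycle. The cleaner route is sign-consistency of paths. Take vertices $u$ and $w$ with two internally distinct paths from $u$ to $w$. Two simple paths from $u$ to $w$ together with a simple path from $w$ back to $u$ form closed walks, and the ``theta''-type configurations yield an even simple cycle unless the two paths have equal parity. Hence all paths between two vertices that extend to simple cycles have the same parity. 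This ``path parity'' lemma is the main obstacle, since closed walks decompose into simple cycles but overlapping paths complicate the decomposition. I would attack it by induction on $|V|+|A|$, contracting or deleting arcs while preserving strong connectivity and the odd-cycle property; this mirrors how cactus-expandability is handled in \cite{Az2}.

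Given the lemma, suppose $f$ has a fixed point $x$. Pick any vertex $j$ and an in-neighbour $i$ whose literal ``supports'' $x_j$, meaning that flipping $x_i$ could flip $f_j$. A critical argument does the rest. Since $f_j(x)=x_j$ and $f_j$ is monotone, we can follow a chain of supporting in-neighbours backwards. More concretely, define $\sigma(v)\in\{0,1\}$ by the parity of a path from a root $r$ to $v$; this is well defined on any spanning structure because of the parity lemma. The lemma then implies that the map $y\mapsto y\oplus\sigma$ conjugates $f$ to a system in which every edge is activating except those entering $r$, and those must all be inhibiting, because every cycle through $r$ is odd. In the conjugated system every $g_j$ with $j\neq r$ is monotone increasing and $g_r$ is decreasing in all of its inputs. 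Take a fixed point $y$. If $y_r=0$, then strong connectivity together with increasing $g_j$ gives that $y$ is below the fixed point reached from $r$'s value. The simplest way to finish is to follow, from $r$, a path backwards of vertices $v$ with $y_v$ determined by some in-neighbour value equal to $y_r$. By monotonicity, either $y\equiv y_r$ on a cycle through $r$ or a witnessing literal exists. Through an inhibiting arc into $r$ this forces $y_r=\overline{y_r}$, which is a contradiction. Making this last step rigorous, by choosing the minimal or maximal fixed-point argument on the increasing part, is routine once the conjugation is established.
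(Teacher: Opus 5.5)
Your necessity direction (an even simple cycle $C$ yields a fixed point) is sound and is essentially the paper's construction. You make the arcs of $C$, and of a forest grown outward from $C$, consistent with $x$. Then $f_j$ is the OR of all input literals when $x_j=1$ and the AND when $x_j=0$. One small correction: the processing order needs every vertex to be reachable from $C$, which strong connectivity gives; ``every vertex has an in-neighbour'' is not enough.

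The sufficiency direction, however, rests on a false claim. A switching $y\mapsto y\oplus\sigma$ changes the sign of an arc $(u,v)$ by $\sigma(u)+\sigma(v)$, so it preserves the parity of every cycle. If afterwards every arc not entering $r$ were activating, each simple cycle avoiding $r$ would be even, contradicting the hypothesis. For instance, take $V=\{a,b\}$ with $a\dashv a$, $b\dashv b$, $a\to b$, $b\dashv a$. It is strongly connected and all three simple cycles are odd. Yet the loop at the non-root vertex stays inhibiting under every switching, whichever root you choose. Nor can $\sigma$ be defined as ``the parity of a path from $r$ to $v$''. With $u\to w$, $u\dashv x$, $x\to w$, $w\dashv x$, $x\to u$, every simple cycle is odd, but the paths $(u,w)$ and $(u,x,w)$ have different parities. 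So no induction on $|V|+|A|$ will establish the lemma in the form you use it. The closing fixed-point argument presupposes the conjugation and falls with it.

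The missing idea is the one you mention and then abandon: follow supporting in-arcs backwards, with the right notion of support. At a fixed point $\hat{x}$, call $(i,j)$ admissible if its literal ($x_i$ for an activating arc, $\bar{x_i}$ for an inhibiting one) takes the value $\hat{x}_j$ at $\hat{x}$. Sensitivity (``flipping $x_i$ flips $f_j$'') is the wrong notion: $x_1\vee x_2$ at $(1,1)$ has no sensitive input.

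Every vertex $j$ has an admissible in-arc. Otherwise all input literals equal $\overline{\hat{x}_j}$ at $\hat{x}$, so any change of inputs can only move literals toward $\hat{x}_j$. By monotonicity this cannot move $f_j$ away from $f_j(\hat{x})=\hat{x}_j$, so $f_j$ is constant, contradicting that $j$ has an in-arc. Walking backwards along admissible arcs must close a simple cycle of admissible arcs. Along an admissible arc the value changes exactly when the arc is inhibiting, so returning to the start forces an even number of inhibiting arcs. This short argument is the paper's proof, and it replaces your whole lemma-plus-conjugation route.
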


\subsection{Proof of the Theorem}

Instead of proving the equivalence in Theorem \ref{main} directly, we prove the equivalence of the negations of its two statements.

\subsubsection{If part}

In this subsubsection, we shall prove the following proposition.

\begin{proposition}\label{if}
If a Boolean dynamical system $f \colon \{0,1\}^N \to \{0,1\}^N$ has a fixed point $\hat{x}\in\{0,1\}^N$,
the Boolean network of $f$ is $\Sigma=(V, A^+, A^-)$, and $\Sigma$ is  strongly connected,
then $\Sigma$ has a simple cycle with an even number of inhibiting edges.
\end{proposition}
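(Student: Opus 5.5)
The plan is to use the fixed point $\hat{x}$ to select, for every vertex, one incoming arc that is ``consistent'' with $\hat{x}$. Following these selected arcs backwards must eventually close up into a simple cycle, and consistency will force that cycle to have an even number of inhibiting edges.

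Call an arc $(i,j)\in A$ \emph{consistent} (with respect to $\hat{x}$) if either $(i,j)\in A^+$ and $\hat{x}_i=\hat{x}_j$, or $(i,j)\in A^-$ and $\hat{x}_i\neq\hat{x}_j$. The key step is to show that every vertex $j\in V$ has at least one consistent incoming arc.

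First, every vertex has indegree at least one. Strong connectivity gives a path of length $\geq 1$ from $j$ to itself, and the last arc of that path enters $j$.

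Now suppose, for contradiction, that every arc into $j$ is inconsistent. Take the case $\hat{x}_j=1$; the case $\hat{x}_j=0$ is symmetric.
\begin{itemize}
\item For every activating in-arc $i\to j$ we have $\hat{x}_i=0$.
\item For every inhibiting in-arc $i\dashv j$ we have $\hat{x}_i=1$.
\end{itemize}
So each relevant coordinate of $\hat{x}$ sits at the value that minimizes $f_j$ in that coordinate. Coordinates not corresponding to arcs are irrelevant, since $f_j$ is independent of them.

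Given any $y\in\{0,1\}^N$, move from $\hat{x}$ to $y$ by changing one coordinate at a time. Each change of an activating coordinate from $0$ to $1$, or of an inhibiting coordinate from $1$ to $0$, does not decrease $f_j$, because $f_j$ is monotone. Changes of irrelevant coordinates do nothing. Hence $f_j(y)\geq f_j(\hat{x})=\hat{x}_j=1$ for all $y$, so $f_j$ is constant. Then $f_j$ is independent of every $x_i$, so $j$ has indegree $0$, which is a contradiction. In the case $\hat{x}_j=0$ the same argument shows $f_j\equiv 0$.

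With this in hand, choose for each $j$ one consistent in-arc $(p(j),j)$. Start at any vertex $v_0$ and iterate $p$. Since $V$ is finite, some vertex repeats. Taking the first repetition gives distinct vertices $w,p(w),\dots,p^{k-1}(w)$ with $p^k(w)=w$. Reversed, this is a simple cycle in $G$, and it uses only consistent arcs.

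Finally, walk once around this cycle and track the value $\hat{x}$ at each vertex. Activating arcs preserve the value and inhibiting arcs flip it. Returning to the starting vertex restores the original value, so the number of flips, which is the number of inhibiting edges, is even.

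The only step needing care is the monotonicity argument that ``all in-arcs inconsistent'' forces $f_j$ to be constant. It amounts to checking that $\hat{x}$ is a global minimizer (or maximizer) of $f_j$ by coordinatewise moves. The remaining steps are routine.
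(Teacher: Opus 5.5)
Your proof is correct and follows the paper's argument: your ``consistent'' arcs are the paper's admissible edges, and your contradiction step is the paper's lemma that every vertex has an incoming admissible edge. The backward walk with the parity count is the paper's proof of the proposition, and you spell out two points the paper leaves implicit: the coordinatewise monotone walk showing $f_j$ is constant, and taking the first repetition so that the cycle is simple.
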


\begin{definition}
Fix a Boolean dynamical system $f \colon \{0,1\}^N \to \{0,1\}^N$ and a fixed point $\hat{x}\in\{0,1\}^N$ of $f$.
Suppose Boolean network of $f$ is $\Sigma=(V, A^+, A^-)$.
We partition the edge set $A=A^+ \sqcup A^-$ into two colors.
Take an arc $(i,j)\in A$.

If $(i,j) \in A^+$ and $\hat{x}_i=\hat{x}_j$; or $(i,j) \in A^-$ and $\hat{x}_i\neq \hat{x}_j$, then $(i,j)$ is called an \textbf{admissible edge} and colored green.
If $(i,j) \in A^+$ and $\hat{x}_i\neq\hat{x}_j$; or $(i,j) \in A^-$ and $\hat{x}_i= \hat{x}_j$, then $(i,j)$ is called an \textbf{inadmissible edge} and colored red.


\begin{table}[htbp]
\centering
\begin{tabular}{cc}
\toprule
Admissible & Inadmissible \\
\midrule
$0 \to 0$ & $0 \to 1$ \\
$1 \to 1$ & $1 \to 0$ \\
$0 \dashv 1$ & $0 \dashv 0$ \\
$1 \dashv 0$ & $1 \dashv 1$ \\
\bottomrule
\end{tabular}
\caption{Admissible and inadmissible edges.}
\label{tab:admissible}
\end{table}

\end{definition}

\begin{lemma}
Fix a Boolean dynamics $f \colon \{0,1\}^N \to \{0,1\}^N$ and a fixed point $\hat{x}\in\{0,1\}^N$ of $f$.
If the Boolean network $\Sigma = (V,A^+,A^-)$ of $f$ is strongly connected, 
then each vertex $v_j \in V$ has at least one incoming admissible edge.

\end{lemma}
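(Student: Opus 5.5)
Fix a vertex $v_j$. Since $\Sigma$ is strongly connected, $v_j$ has at least one incoming arc; in particular $f_j$ is not constant. Otherwise $v_j$ would have indegree zero, and no path from another vertex could reach it. The single-vertex case needs a separate remark: if $N=1$, strong connectivity as defined requires a path of length at least one from $v_1$ to itself, so there is a loop at $v_1$ and again $f_1$ is non-constant. So the in-neighbourhood $I_j=\{i : (i,j)\in A\}$ is nonempty, and $f_j$ depends exactly on the variables $x_i$ with $i\in I_j$.

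I will argue by contradiction. Suppose every arc $(i,j)$ with $i\in I_j$ is inadmissible. By Table \ref{tab:admissible}, this means that for each $i\in I_j$ the value $\hat{x}_i$ pushes $f_j$ towards $\overline{\hat{x}_j}$. Concretely, suppose $\hat{x}_j=1$. For $(i,j)\in A^+$, inadmissibility gives $\hat{x}_i=0$, which is the minimising value for a weakly increasing argument. For $(i,j)\in A^-$, it gives $\hat{x}_i=1$, which is the minimising value for a weakly decreasing argument. Since $f_j$ is monotone in each coordinate and independent of $x_i$ for $i\notin I_j$, moving each coordinate in turn cannot decrease the value when we pass from $\hat{x}$ to any other point. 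Hence $\hat{x}$ is a global minimiser of $f_j$, i.e.\ $f_j(\hat{x})\le f_j(y)$ for all $y$. The fixed point condition gives $f_j(\hat{x})=\hat{x}_j=1$, so $f_j\equiv 1$ is constant.

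This contradicts the fact that $f_j$ depends on $x_i$ for some $i\in I_j\neq\emptyset$, since "increasing" and "decreasing" both include "not independent". The case $\hat{x}_j=0$ is symmetric: now $\hat{x}$ is a global maximiser of $f_j$, so $f_j\equiv 0$, again a contradiction.

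The only step needing care is the global-minimiser claim. I will prove it by changing the coordinates of $\hat{x}$ one at a time into those of an arbitrary $y$, using weak monotonicity in the single coordinate being changed at each step. This is routine, so I expect no real obstacle beyond stating the monotonicity chain cleanly and handling the $N=1$ degenerate case noted above.
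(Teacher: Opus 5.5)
Your proposal is correct and follows the paper's proof essentially verbatim: strong connectivity gives $v_j$ an incoming arc, so $f_j$ is non-constant, and if every incoming arc were inadmissible then $\hat{x}$ would be a global minimiser (or maximiser) of $f_j$, forcing $f_j\equiv\hat{x}_j$, which is a contradiction. Your coordinate-by-coordinate monotonicity argument and the $N=1$ remark only make explicit details the paper leaves implicit; in fact the separate $N=1$ case is unnecessary, since paths have length $n\ge 1$ and so strong connectivity (taking $v=w$) already forces a cycle through every vertex for any $N$.
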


\begin{proof}
Since $\Sigma$ is strongly connected, $v_j$ has at least one incoming edge. Therefore, $f_j$ is not a constant function.

We prove the lemma by contradiction.
Without loss of generality, suppose $\hat{x}_j=1$.
We assume that all incoming edges to $v_j$ are inadmissible.
Let 
$I^+:=\{i \mid \text{$(i,j)$ is an activating edge}\}$ 
and
$I^-:=\{i \mid \text{$(i,j)$ is an inhibiting edge}\}$.

Since $\hat{x}$ is a fixed point of $f$, $f_j(\hat{x})=\hat{x}_j=1$.
By definition, 
$\hat{x}_i=0$ if $i\in I^+$ and 
$\hat{x}_i=1$ if $i\in I^-$.
On the other hand, the function $f_j(x_1,\dots,x_N)$ is 
increasing in $x_i$ if $i\in I^+$,
decreasing in $x_i$ if $i\in I^-$, and
independent of $x_i$ if $i \not\in I^+ \sqcup I^-$.
Therefore, $f_j(x_1,\dots,x_N)=1$ for any $(x_1,\dots,x_N)$, so it is a constant function; this is a contradiction.

We can treat the case $\hat{x}_j=0 $ in a similar way.
\end{proof}

\begin{proof}[Proof of the proposition \ref{if}]

Take an arbitrary node $v_j \in V$ and set $j_0=j$.
$v_{j_0}$ has at least one incoming admissible edge $(j_1,j_0)$. 
Inductively, suppose $(j_{i+1},j_i)$ is an admissible edge for $i=1,2,\dots$.
Since $V$ is a finite set, there exist $n_0$ and $n_1$ so that
$n_0 < n_1$, $j_{n_0} = j_{n_1}$, and
$(j_{n_1}, j_{n_1-1}, \dots, j_{n_0+1}, j_{n_0})$ is a simple cycle of $\Sigma$ consisting of admissible edges.
Since this cycle is composed of admissible edges, the number of inhibiting edges should be even because of the parity.

\end{proof}

\subsubsection{Only If part}

In this subsubsection, we shall prove the other way.

\begin{proposition}\label{onlyif}
If $\Sigma = (V, A^+, A^-)$ is strongly connected and has a simple cycle with an even number of inhibiting edges,
then there exists a map $f \colon \{0,1\}^N \to \{0,1\}^N$ with a fixed point $\hat{x}\in\{0,1\}^N$,
such that the Boolean network of $f$ is $\Sigma$.
\end{proposition}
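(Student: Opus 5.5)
Let $C=(c_0,c_1,\dots,c_{m-1},c_m=c_0)$ be a simple cycle of $\Sigma$ with an even number of inhibiting edges. The plan is to pick a state $\hat{x}$ and then construct each $f_j$, with exactly the prescribed signed in-neighbours, so that $f_j(\hat{x})=\hat{x}_j$. Strong connectivity is not really needed here beyond guaranteeing in-edges; the construction is local at each vertex.

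\emph{Step 1: choose $\hat{x}$ on the cycle.} Set $\hat{x}_{c_0}=0$. Walking along $C$, put $\hat{x}_{c_{k}}=\hat{x}_{c_{k-1}}$ if $(c_{k-1},c_k)\in A^+$ and $\hat{x}_{c_k}=\overline{\hat{x}_{c_{k-1}}}$ if $(c_{k-1},c_k)\in A^-$, for $k=1,\dots,m-1$. Because $C$ has an even number of inhibiting edges, this rule is consistent on the closing arc $(c_{m-1},c_0)$. Hence every arc of $C$ is admissible in the sense of the colouring defined above. For vertices not on $C$, set $\hat{x}_v=0$ arbitrarily.

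\emph{Step 2: define $f_j$ for each vertex $j$.} Let $I^+_j$ and $I^-_j$ be the activating and inhibiting in-neighbours of $j$. For each $i\in I^+_j\sqcup I^-_j$ define the literal $\ell_i=x_i$ if $i\in I^+_j$ and $\ell_i=\bar{x_i}$ if $i\in I^-_j$. Any function $\bigvee_i \ell_i$ or $\bigwedge_i\ell_i$ is increasing in each $x_i$ with $i\in I^+_j$, decreasing in each $x_i$ with $i\in I^-_j$, and independent of all other variables. So the Boolean network is exactly $\Sigma$; a single literal is the degenerate case of either. The case of no in-edges does not occur, by strong connectivity; otherwise one would take a constant.
\begin{itemize}
\item If $j=c_k$ lies on $C$, let $i=c_{k-1}$ be its predecessor on $C$; the arc $(i,j)$ is admissible, so $\ell_i(\hat{x})=\hat{x}_j$. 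If $\hat{x}_j=1$, take $f_j=\bigvee\ell$: one true literal forces value $1$. If $\hat{x}_j=0$, take $f_j=\bigwedge\ell$: one false literal forces value $0$. Either way $f_j(\hat{x})=\hat{x}_j$.
\item If $j$ is not on $C$, I would first try simply defining $f_j$ as the OR or the AND of its literals and then \emph{redefining} $\hat{x}_j:=f_j(\hat{x})$. This is circular, though, since $f_j$ may depend on other off-cycle values.
\end{itemize}

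\emph{Step 3: handle off-cycle vertices via a BFS ordering (the main obstacle).} To remove the circularity, use strong connectivity to order the off-cycle vertices by distance from $C$. Each off-cycle vertex $j$ at distance $d$ has an in-neighbour $p(j)$ at distance $d-1$ (distance $0$ meaning on $C$). Assign $\hat{x}$ in increasing distance: having fixed $\hat{x}_{p(j)}$, choose $\hat{x}_j$ so that the arc $(p(j),j)$ is admissible, i.e.\ $\hat{x}_j=\ell_{p(j)}(\hat{x})$. This defines $\hat{x}$ on all of $V$ with no circularity.

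Then for every vertex $j$ there is a designated in-arc (its cycle predecessor or $p(j)$) that is admissible. Choosing OR when $\hat{x}_j=1$ and AND when $\hat{x}_j=0$, exactly as in Step 2, gives $f_j(\hat{x})=\hat{x}_j$ regardless of the other inputs. This makes $\hat{x}$ a fixed point, which completes the construction; it is essentially the converse of the Lemma above.
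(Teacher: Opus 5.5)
Your proposal is correct and is essentially the paper's argument: the arcs $(p(j),j)$ together with $C$ form a spanning bonsai on $C$. As in the paper, you propagate $\hat{x}$ along it so every arc is admissible, then take AND when $\hat{x}_j=0$ and OR when $\hat{x}_j=1$. The only difference is that you build the bonsai directly from shortest-path distances to $C$, whereas the paper gets it by a maximality argument; your construction uses strong connectivity for reachability from $C$, not merely to guarantee in-edges as your opening remark suggests.
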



Before proving the proposition, we will define some notation of a digraph.
The notion of out-trees (or arborescences) is defined by Deo \cite[pp206--207, section 9-6]{D}.

\begin{definition}
A digraph $T$ is said to be an \textbf{out-tree} (or an arborescence) if
\begin{enumerate}
\item $T$ is a tree as a non-directed graph.
\item $T$ has a unique vertex $r$ of zero in-degree.
\end{enumerate}
This node $r$ is called the root of the out-tree. An example of an out-tree is shown in Figure \ref{fig:out-tree}.
\end{definition}

\begin{figure}[htbp]
\centering
\begin{tikzpicture}[
    >=Stealth,
    vertex/.style={
        circle,
        draw,
        minimum size=5mm,
        inner sep=0pt
    },
    edge/.style={->, thick}
]

\node[vertex] (r) at (0,0) {$r$};

\node[vertex] (v1) at (-1,1) {};
\node[vertex] (v2) at (1,1) {};
\node[vertex] (v3) at (1,2.5) {};
\node[vertex] (v4) at (2,0) {};
\node[vertex] (v5) at (3,1) {};
\node[vertex] (v6) at (4,0) {};

\draw[edge] (r) -- (v1);
\draw[edge] (r) -- (v2);
\draw[edge] (v2) -- (v3);
\draw[edge] (r) -- (v4);
\draw[edge] (v4) -- (v5);
\draw[edge] (v4) -- (v6);
\end{tikzpicture}
\caption{An out-tree $T$ with a root $r$.}
\label{fig:out-tree}
\end{figure}

\begin{definition}
Suppose that $C$, $T_1$, \dots, $T_k$ are subdigraphs of a digraph $G$ so that
$C$ is a simple cycle and $T_i$ is an out-tree with a root $r_i$.
We also assume that $C \cap T_i = \{r_i\}$ and $T_i \cap T_j = \emptyset$ for $i \neq j$.
In such a case, we call a subdigraph $B := C \cup T_1 \cup \cdots \cup T_k$ a \textbf{bonsai} on $C$.
\end{definition}

\begin{figure}[htbp]
\centering
\begin{tikzpicture}[
    >=Stealth,
    vertex/.style={
        circle,
        draw,
        minimum size=5mm,
        inner sep=0pt
    },
    edge/.style={->, thick}
]

\node[vertex] (v1) at (0,2) {$r_1$};
\node[vertex] (v2) at (2,2) {$r_2$};
\node[vertex] (v3) at (2,0) {$r_3$};
\node[vertex] (v4) at (0,0) {};

\draw[edge] (v1) -- (v4);
\draw[edge] (v4) -- (v3);
\draw[edge] (v3) -- (v2);
\draw[edge] (v2) -- (v1);

\node[vertex] (a1) at (-1.2,3.2) {};
\node[vertex] (a2) at (-2.2,4.2) {};
\node[vertex] (a3) at (-0.2,4.2) {};

\draw[edge] (v1) -- (a1);
\draw[edge] (a1) -- (a2);
\draw[edge] (a1) -- (a3);

\node[vertex] (b1) at (3.2,3.0) {};
\node[vertex] (b2) at (4.2,4.0) {};

\draw[edge] (v2) -- (b1);
\draw[edge] (b1) -- (b2);

\node[vertex] (c1) at (3.5,-1.0) {};
\node[vertex] (c2) at (4.5,-2.0) {};
\node[vertex] (c3) at (2.5,-1.5) {};

\draw[edge] (v3) -- (c1);
\draw[edge] (c1) -- (c2);
\draw[edge] (v3) -- (c3);

%

\node at (1,1) {$C$};
\node at (-1.8,5.0) {$T_1$};
\node at (4.5,4.6) {$T_2$};
\node at (3.5,-2.0) {$T_3$};

\end{tikzpicture}
\caption{A bonsai on a simple cycle $C$.}
\label{fig:bonsai}
\end{figure}

See Figure \ref{fig:bonsai} for an example of bonsai.
We note that every vertex in a bonsai has an incoming edge of a bonsai.
Here is an important lemma for a bonsai.

\begin{lemma} (Existence of a Spanning Bonsai)
Suppose $C$ is a simple cycle in a strongly connected digraph $G$.
Then there exists a bonsai $B$ on $C$ that includes all the vertices of $G$.
Such a bonsai is called a \textbf{spanning bonsai} in $G$.
\end{lemma}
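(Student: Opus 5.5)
The plan is a breadth-first search, in $G$, starting simultaneously from all vertices of $C$. Let $V(C)$ denote the vertex set of $C$. For each vertex $v$ of $G$, let $d(v)$ be the length of a shortest path from $V(C)$ to $v$, with $d(v)=0$ exactly when $v\in V(C)$. Since $G$ is strongly connected, every vertex is reachable from any vertex of $C$, so $d(v)$ is finite for all $v\in V$.

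For each $v$ with $d(v)\ge 1$, choose a predecessor $p(v)$ such that $(p(v),v)\in A$ and $d(p(v))=d(v)-1$. Such a vertex exists: take the second-to-last vertex of a shortest path from $V(C)$ to $v$. Let $F$ be the set of arcs $(p(v),v)$ over all $v$ with $d(v)\ge 1$. The candidate spanning bonsai is then $B:=C\cup F$.

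Next, I will check that $B$ is a bonsai. Following predecessors from any $v$ strictly decreases $d$, so the walk $v, p(v), p(p(v)),\dots$ terminates after $d(v)$ steps at a unique vertex $\rho(v)\in V(C)$. For each vertex $r$ of $C$, let $T_r$ consist of the vertices $\{v \mid \rho(v)=r\}$ together with the arcs of $F$ between them. Note that $v$ and $p(v)$ always have the same $\rho$, so every arc of $F$ lies in exactly one $T_r$.

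Each $T_r$ is an out-tree rooted at $r$. Every non-root vertex has exactly one incoming arc in $T_r$, and $r$ has none, because $F$ contains no arcs into vertices of $C$. Hence $T_r$ has $|V(T_r)|-1$ arcs. It is connected as an undirected graph, since every vertex reaches $r$ by predecessor steps. A connected graph with this many edges is a tree, and $r$ is its unique vertex of in-degree zero.

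By construction $T_r\cap C=\{r\}$, since $d(v)\ge1$ for every non-root vertex of $T_r$. Distinct trees are vertex-disjoint because $\rho$ is a function. Keeping only those $T_r$ that are nontrivial, and indexing them as $T_1,\dots,T_k$, gives exactly the definition of a bonsai on $C$. It is spanning because every vertex of $G$ has a finite $d$, and so lies in $C$ or in some $T_r$.

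The main point of care is to make sure $B$ contains no cycle other than $C$ and no vertex with two incoming tree arcs. Both are guaranteed by the strict decrease of $d$ along the chosen arcs, and by never choosing arcs into vertices of $C$. Everything else is routine.
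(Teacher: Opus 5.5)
Your proof is correct, but it follows a different route from the paper's. The paper argues extremally. Among the finitely many bonsais on $C$ it takes one, $B_{\mathrm{max}}$, that is maximal under inclusion. If some vertex $w$ were missed, it picks a simple path from a vertex of $B_{\mathrm{max}}$ to $w$ and grafts on the part after the last vertex lying in $B_{\mathrm{max}}$, contradicting maximality.

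You instead construct the spanning bonsai directly, as a multi-source shortest-path (breadth-first) forest grown from $V(C)$. This costs a bit more bookkeeping, but it actually verifies the out-tree structure: exactly one incoming arc at each non-root vertex, none into $C$, undirected connectivity, and $|V(T_r)|-1$ arcs. The strict decrease of $d$ also rules out loops and extra cycles in one stroke. The paper only remarks that grafting the tail ``creates no additional cycle''. It leaves implicit whether the tail extends an existing tree or starts a new one at a vertex of $C$.

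The paper's argument is shorter, and it is reused almost verbatim for the claim in the proof of Theorem~\ref{general}. Your construction uses $C$ only through its vertex set, as a set of sources. So it would adapt equally well there, by searching simultaneously from all the minimal components.
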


\begin{proof}
Suppose $\mathcal{B}(G,C)$ is the set of all bonsais on $C$ in $G$.
Since $\mathcal{B}(G,C)$ is a finite set, we can take a maximal element $B_{\mathrm{max}}$ among
$\mathcal{B}(G,C)$ with respect to inclusion. Then $B_{\mathrm{max}}$ is a spanning bonsai.
Otherwise, take vertices $w \not \in B_{\mathrm{max}}$ and $v  \in B_{\mathrm{max}}$.
Since $G$ is strongly connected, there exists a simple path $(v=v_0, v_1,\dots,v_n=w)$ from $v$ to $w$.
Set $i:= \max\{j \mid v_j \in B_{\mathrm{max}}\}$. 
Then $B_{\mathrm{max}} \cup (v_i,\dots,v_n)$ is also a bonsai on $C$, 
since $v_{i+1},\dots,v_n \not\in B_{\mathrm{max}}$, attaching the tail of the simple path creates no additional cycle.
That contradicts the fact that $B_{\mathrm{max}}$ is maximal.
\end{proof}

Now we can prove our proposition.

\begin{proof}[Proof of the proposition \ref{onlyif}]

Suppose $\Sigma = (V, A^+, A^-)$ is strongly connected and $C$ is a simple cycle with an even number of inhibiting edges.
Then, from the lemma above, there is a spanning bonsai $B \subset \mathcal{B}(\Sigma,C)$.

We set a candidate for a fixed point $\hat{x}$ so that every edge of $B$ is admissible with respect to $\hat{x}$.
This is possible since $B$ has only one cycle $C$, which has an even number of inhibiting edges.

Then we shall construct $f$ so that $f(\hat{x})=\hat{x}$ and $f$ is compatible with $\Sigma=(V, A^+, A^-)$.
This can be done quite simply. The main idea is that if $\hat{x}_j=0$ then construct $f_j$ by \textit{AND},
and if $\hat{x}_j=1$ then construct $f_j$ by \textit{OR}.

To be precise, if $\hat{x}_j=0$ then
\[
f_j(x) = \left( \bigwedge_{(i,j)\in A^+} x_i \right) \wedge \left( \bigwedge_{(i,j)\in A^-} \bar{x_i} \right),
\]
and if $\hat{x}_j=1$ then
\[
f_j(x) := \left( \bigvee_{(i,j)\in A^+} x_i \right) \vee \left( \bigvee_{(i,j)\in A^-} \bar{x_i} \right).
\]
Then $f(\hat{x})=\hat{x}$ and $f$ is compatible with $\Sigma=(V, A^+, A^-)$.
Since $B$ is a spanning bonsai, every vertex has at least one incoming admissible edge in $B$.
Hence each coordinate satisfies $f_j(\hat{x})=\hat{x}_j$.
Therefore, this $f$ fixes $\hat{x}$.
\end{proof}

Theorem \ref{main} follows from propositions \ref{if} and \ref{onlyif}.

%


\section{General case}

Now we are going to study the case where $\Sigma$ is not strongly connected.
Note that if $j \in V$ has no incoming edge, then $f_j$ does not depend on anything, so it is a constant function.


The main idea is the following. See Figure \ref{fig:networks}. (a) is structurally oscillatory because it is strongly connected and its unique simple cycle has
an odd number of inhibiting edges.
(b) is not structurally oscillatory  because it is strongly connected and has a simple cycle with an even number of inhibiting edges.
(c) is structurally oscillatory, because the outgoing edge does not affect the dynamical system.
However, (d) is not structurally oscillatory, because $(0,0,0,0,0,0)$ can be a fixed point. 
This means that inward edges do affect the dynamical system.
So the idea is that we should focus on the most upstream region.

\begin{figure}[htbp]
\centering
\begin{tikzpicture}[
  vertex/.style={circle, draw, minimum size=5mm, inner sep=0pt},
  act/.style={-{Stealth}, thick},
  inh/.style={thick, -{Bar[width=7pt]}}
]
  \node[vertex] (a1) at (-3,4) {$1$};
  \node[vertex] (a2) at (-4,6) {$2$};
  \node[vertex] (a3) at (-5,4) {$3$};

  \node[vertex] (b1) at (5,4) {$1$};
  \node[vertex] (b2) at (4,6) {$2$};
  \node[vertex] (b3) at (3,4) {$3$};

  \node[vertex] (c1) at (-5,0) {$1$};
  \node[vertex] (c2) at (-6,2) {$2$};
  \node[vertex] (c3) at (-7,0) {$3$};
  \node[vertex] (c4) at (-1,0) {$4$};
  \node[vertex] (c5) at (-2,2) {$5$};
  \node[vertex] (c6) at (-3,0) {$6$};

  \node[vertex] (d1) at (3,0) {$1$};
  \node[vertex] (d2) at (2,2) {$2$};
  \node[vertex] (d3) at (1,0) {$3$};
  \node[vertex] (d4) at (7,0) {$4$};
  \node[vertex] (d5) at (6,2) {$5$};
  \node[vertex] (d6) at (5,0) {$6$};

  \draw[act] (a1) -- (a2);
  \draw[act] (a2) -- (a3);
  \draw[inh] (a3) -- (a1);

  \draw[act] (b1) -- (b2);
  \draw[act] (b2) -- (b3);
  \draw[act] (b3) -- (b1);

  \draw[act] (c1) -- (c2);
  \draw[act] (c2) -- (c3);
  \draw[inh] (c3) -- (c1);
  \draw[act] (c4) -- (c5);
  \draw[act] (c5) -- (c6);
  \draw[act] (c6) -- (c4);
  \draw[act] (c1) -- (c6);

  \draw[act] (d1) -- (d2);
  \draw[act] (d2) -- (d3);
  \draw[inh] (d3) -- (d1);
  \draw[act] (d4) -- (d5);
  \draw[act] (d5) -- (d6);
  \draw[act] (d6) -- (d4);
  \draw[act] (d6) -- (d1);

\node at (-4,3) {(a)};
\node at (4,3) {(b)};
\node at (-4,-1) {(c)};
\node at (4,-1) {(d)};

\end{tikzpicture}
\caption{Examples illustrating structural oscillatority.}
\label{fig:networks}
\end{figure}

\subsection{Preorder and Strongly Connected Component}

We define a binary relation $ \precsim $ on $V$ by
$x \precsim y $ if there exist a directed path from $x$ to $y$ or $x=y$. 
Then, $ \precsim $ is a preorder; that is, it satisfies:
\begin{enumerate}
\item Reflexivity: $ x \precsim x$ for all $ x \in V$.
\item Transitivity: If $ x \precsim y$ and $ y \precsim z$ then $ x \precsim z$ for all $x,y,z \in V$.
\end{enumerate}

We write $x \sim y$ if $x \precsim y$ and $y \precsim x$. Then $\sim$ is an equivalence relation.
Suppose $V/\sim \, = \{V_1, \dots, V_l\}$.
Take $\Sigma_i=(V_i,A_i^+,A_i^-)$ and $G_i=(V_i,A_i)$ as Boolean full subnetworks and full subdigraphs; that is,
$$
A_i^+ := \{ (x,y) \mid \text{$x,y \in V_i$ and $(x,y)\in A^+$} \},
$$
$$
A_i^- := \{ (x,y) \mid \text{$x,y \in V_i$ and $(x,y)\in A^-$} \},
$$
$$
A_i := \{ (x,y) \mid \text{$x,y \in V_i$ and $(x,y)\in A$} \} = A_i^+ \sqcup A_i^-.
$$

Each full subdigraph $G_i$ is called a strongly connected component of $G$. $G_i$ is either a single vertex without a loop,
or a maximal strongly connected subdigraph of $G$.

The preorder $ \precsim $ on $V$ induces a partial order $\leq$ on $V/\sim$.
Note that $V_i \leq V_j$ if and only if for any $x \in V_i$ and $y \in V_j$ there exists a directed path from $x$ to $y$, or $V_i=V_j$ is a single vertex without a loop.

We also induce the same order on $\{\Sigma_1, \dots, \Sigma_l \}$ and $\{G_1, \dots, G_l\}$, that is,
$V_i \leq V_j \Leftrightarrow \Sigma_i \leq \Sigma_j \Leftrightarrow G_i \leq G_j$.

Define $\mathcal{S}_{\mathrm{min}}$ to be the set of all minimal elements of $(\{\Sigma_1, \dots, \Sigma_l \}, \leq)$.
Now we can state the characterization theorem for the general case:

\begin{theorem}\label{general}
$\Sigma$ is structurally oscillatory if and only if 
at least one of $\Sigma_i \in \mathcal{S}_{\mathrm{min}}$ is structurally oscillatory.
\end{theorem}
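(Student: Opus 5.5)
The plan is to prove both directions by restricting to the minimal components. The basic observation is the following. If $\Sigma_i \in \mathcal{S}_{\mathrm{min}}$, then no arc of $\Sigma$ enters $V_i$ from outside. Indeed, an arc $(w,v)$ with $w \notin V_i$ and $v \in V_i$ would give $V_w < V_i$, contradicting minimality. Hence for $j \in V_i$ the coordinate $f_j$ is independent of all $x_w$ with $w \notin V_i$. So $f$ induces a map $f|_{V_i}\colon \{0,1\}^{V_i}\to\{0,1\}^{V_i}$, and its Boolean network is exactly the full subnetwork $\Sigma_i$.

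\emph{If part.} Suppose some $\Sigma_i \in \mathcal{S}_{\mathrm{min}}$ is structurally oscillatory, and let $f$ be any Boolean dynamical system with network $\Sigma$. If $\hat{x}$ were a fixed point of $f$, its restriction $\hat{x}|_{V_i}$ would be a fixed point of $f|_{V_i}$. But $f|_{V_i}$ has network $\Sigma_i$, so it has no fixed point. Therefore $f$ is oscillatory, and $\Sigma$ is structurally oscillatory.

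\emph{Only if part.} Suppose that no $\Sigma_i \in \mathcal{S}_{\mathrm{min}}$ is structurally oscillatory. We construct $f$ with network $\Sigma$ together with a fixed point $\hat{x}$.
\begin{enumerate}
\item \emph{Ordering.} Choose a linear extension of $\leq$ on $\{V_1,\dots,V_l\}$ and define $\hat{x}$ and $f$ on the components in this order. When we reach $V_j$, every in-neighbour outside $V_j$ of a vertex of $V_j$ lies in an earlier component, so its value is already fixed.
\item \emph{Minimal components.} For each minimal $V_i$, pick a system $g^i$ with network $\Sigma_i$ and a fixed point $y^i$; these exist by hypothesis. Set $\hat{x}|_{V_i}=y^i$ and $f_j=g^i_j$ for $j\in V_i$. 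This also covers a vertex with no incoming arc: it forms a minimal loopless singleton, and we may take $f_j$ to be any constant.
\item \emph{Non-minimal components: choosing $\hat{x}$.} For a non-minimal $V_j$ there is an arc $(w,u)$ with $w$ in an earlier component and $u\in V_j$. Choose $\hat{x}_u$ so that $(w,u)$ is admissible, in the sense of Table~\ref{tab:admissible}. If $|V_j|>1$, or $V_j$ is a vertex with a loop, then $G_j$ is strongly connected, so it has a spanning out-tree rooted at $u$. This follows as in the existence of a spanning bonsai, by repeatedly attaching tails of simple paths. Propagate values of $\hat{x}$ along the tree arcs from the root so that every tree arc is admissible. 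This is consistent because a tree has no cycles.
\item \emph{Non-minimal components: defining $f$.} At this stage every vertex of $V_j$ has an incoming admissible arc: $u$ has $(w,u)$, and every other vertex has its tree arc. Define $f_v$ for $v\in V_j$ by the AND/OR formulas from the proof of Proposition~\ref{onlyif}, using all in-arcs of $v$ in $\Sigma$, including those from earlier components. An admissible literal evaluates to $\hat{x}_v$ at $\hat{x}$, so this gives $f_v(\hat{x})=\hat{x}_v$. Moreover $f_v$ is increasing or decreasing in exactly the prescribed variables.
\end{enumerate}
The resulting $f$ has network $\Sigma$ and fixes $\hat{x}$, so $\Sigma$ is not structurally oscillatory.

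The step I expect to need the most care is the non-minimal case, steps 3 and 4. We do not invoke Theorem~\ref{main} there. Instead, the inward arc from upstream replaces the even cycle as the source of an admissible in-arc for the root, and we must check that the single-vertex cases, with and without a loop, are handled by the same construction.
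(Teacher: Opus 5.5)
Your proof is correct. The if direction is the paper's argument, but your only-if direction takes a different route.

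The paper first invokes Theorem~\ref{main} to get, in each minimal component other than a loopless singleton, a simple cycle $C_i$ with an even number of inhibiting edges and a spanning bonsai $B_i$ on it. It then takes a maximal family of pairwise disjoint extended bonsais and out-trees rooted in the minimal components, whose branches may run through several components. Using the same tail-attaching argument as for the spanning bonsai, it shows this family covers all of $V$. It then makes every forest arc admissible and defines all of $f$ by the AND/OR recipe of Proposition~\ref{onlyif}.

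You instead use the hypothesis on the minimal components as a black box: any witness $g^i$ with a fixed point will do, with no appeal to Theorem~\ref{main}. You then process the remaining components along a linear extension of $\leq$. In each one, a single entry arc from upstream serves as the admissible in-arc for the root of a spanning out-tree inside that component.

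What your version buys:
\begin{itemize}
\item The reduction to $\mathcal{S}_{\mathrm{min}}$ becomes purely structural and independent of the cycle criterion.
\item It handles vertices with no incoming arc explicitly. There the AND/OR formula would be an empty conjunction or disjunction and could not reproduce $\hat{x}_j$. The paper's ``in the same way as'' Proposition~\ref{onlyif} passes over this case.
\end{itemize}

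What the paper's version buys: in one stroke it produces a global witness of uniform AND/OR form, up to constants at source vertices. It also reuses a single maximal-forest lemma that mirrors the spanning-bonsai lemma, with no topological ordering needed.
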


Since $\Sigma_i$ is either strongly connected or a single vertex, Theorems \ref{main} and \ref{general}
give the perfect criterion for structurally oscillatory.

\begin{proof}[Proof of theorem \ref{general}]

If $\Sigma_i \in \mathcal{S}_{\mathrm{min}}$ is structurally oscillatory, 
then $f \mid _{ \{0,1\}^{V_i}} $ cannot have a fixed point,
so as $f$.

Suppose that all $\Sigma_i \in \mathcal{S}_{\mathrm{min}}$ are not structurally oscillatory.
Then, each $\Sigma_i$ either is a single vertex $\{v_i\}$ without a loop, or has a bonsai $B_i$ on a simple cycle $C_i$ with an even number of  
inhibiting edges.

Then consider a set $\mathcal{F}=\{(\check{B}_1,\dots,\check{B}_l)\}$, where 
$\check {B_i}$ is an out-tree rooted on $v_i$ if $\Sigma_i=\{v_i\}$,
otherwise $\check {B_i}$ is a bonsai on $C_i$ and $\check {B_i} \supset B_i$, and $\check{B}_1,\dots,\check{B}_l$ are disjoint.%
\footnote{
The notation $\check{B}_i$ is intended to suggest that $B_i$ is extended by attaching branches.
}

We define the inclusion relation on $\mathcal{F}$ by $(\check{B}_1,\dots,\check{B}_l) \subset (\check{B}'_1,\dots,\check{B}'_l)$
if and only if $\check{B}_i \subset \check{B}'_i$ for every $i\in\{1,\dots,l\}$.

\begin{claim}
Then the maximal element $F_\mathrm{max}$ among $\mathcal{F}$, in a sense of inclusion, covers every vertex of $\Sigma$.
\end{claim}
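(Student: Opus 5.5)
The plan is to copy the maximality argument from the proof of the Existence of a Spanning Bonsai lemma, now applied to the whole (not necessarily strongly connected) digraph $G$. Here the tuples in $\mathcal{F}$ are indexed by the minimal components $\Sigma_i \in \mathcal{S}_{\mathrm{min}}$.

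First I will check that $\mathcal{F}$ is nonempty, so that a maximal element $F_{\mathrm{max}} = (\check{B}_1,\dots,\check{B}_l)$ exists; $\mathcal{F}$ is finite, so maximality then makes sense. For the element of $\mathcal{F}$, take $\check{B}_i = \{v_i\}$ (the trivial out-tree) when $\Sigma_i$ is a single vertex, and $\check{B}_i = B_i$ otherwise. These are pairwise disjoint because each $B_i$ lies inside its own component $V_i$, and distinct components are disjoint.

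Next, I will argue by contradiction. Suppose some vertex $w$ is not covered by $F_{\mathrm{max}}$. Since $(V/\sim,\leq)$ is a finite poset, there is a minimal class $V_i \leq [w]$, so $\Sigma_i \in \mathcal{S}_{\mathrm{min}}$. Pick the vertex $v$ to be $v_i$ if $\Sigma_i$ is a single vertex, and any vertex of $C_i$ otherwise. In both cases $v$ is covered by $\check{B}_i$. Because $V_i \leq [w]$ and $w \notin V_i$ (as $w$ is uncovered), there is a path from $v$ to $w$, and hence a simple path $(v=u_0,u_1,\dots,u_n=w)$. Set $k := \max\{j \mid u_j \text{ is covered by } F_{\mathrm{max}}\}$, and let $\check{B}_m$ be the unique member of the tuple containing $u_k$; it is unique by disjointness.

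The key step is to show that $\check{B}_m \cup (u_k,\dots,u_n)$ is still an admissible $m$-th entry, i.e. the enlarged tuple lies in $\mathcal{F}$ and strictly contains $F_{\mathrm{max}}$. This is the point that needs care, and I will split it into cases.
\begin{enumerate}
\item If $\check{B}_m$ is an out-tree, then appending a path whose vertices $u_{k+1},\dots,u_n$ are all new gives again an out-tree with the same root. The tree property holds because no cycle is created and each new vertex has in-degree one.
\item If $\check{B}_m$ is a bonsai and $u_k$ lies in some attached out-tree $T_r$, then the tail extends $T_r$.
\item If $u_k$ lies on $C_m$ and is already the root $r$ of some $T_r$, the tail is attached to $T_r$; if $u_k$ is a cycle vertex with no tree attached, the tail becomes a new out-tree rooted at $u_k$.
\end{enumerate}
In every case the conditions $C_m \cap T_r = \{r\}$ and the disjointness of the trees are preserved, because all new vertices are uncovered. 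Disjointness from the other $\check{B}_{m'}$ holds for the same reason, and the inclusion $\check{B}_m \supset B_m$ is kept. This contradicts maximality, so $F_{\mathrm{max}}$ covers every vertex.
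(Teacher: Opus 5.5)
Your proposal is correct and follows the paper's own argument. You pick a covered vertex $v$ in a minimal component below $w$, take a path from $v$ to $w$, and graft its uncovered tail onto the member of $F_{\mathrm{max}}$ containing the last covered vertex, contradicting maximality; your use of a \emph{simple} path and your case check that the graft stays an out-tree or bonsai just make explicit what the paper leaves implicit (and the path exists simply because $v \neq w$ and $v \precsim w$, so you do not need $B_i$ to be spanning to justify $w \notin V_i$).
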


\begin{proof}[Proof of the claim]
The idea of the proof is the same as one of the existence of a spanning bonsai.

Suppose $F_\mathrm{max} = (\check{B}_1,\dots,\check{B}_l)$ is a maximal element among $\mathcal{F}$ and 
$w$ is a vertex of $\Sigma$ not covered by $F_\mathrm{max}$ that belongs to a strongly connected component $\Sigma_j$. 
If $\Sigma_j \in \mathcal{S}_{\mathrm{min}}$ then $w$ is covered by $B_j$, which is a contradiction. Therefore $\Sigma_j \not\in \mathcal{S}_{\mathrm{min}}$. Since there are only finite numbers of connected components,
there exists $\Sigma_i \in \mathcal{S}_{\mathrm{min}}$ so that $\Sigma_i < \Sigma_j$.
Take a vertex $v$ from $\Sigma_i$, then $v\neq w$ and $v \precsim w$, therefore there is a path $(v=v_0, v_1,\dots, v_n=w)$.
Then set
\[
m := \max\{ k \in \{0,1,\dots,n\} \mid \text{$v_k \in \check{B}_{i'}$ for some $i' \in \{1,2,\dots,l\}$ } \}.
\]
Suppose that $v_m \in \check{B}_{i'}$, where $i'$ may differ from $i$.
Anyway, set $\check{B}'_{i'} := \check{B}_{i'} \cup (v_m,\dots,v_n)$.
Taking $\check{B'}_{i'}$ instead of $\check{B}_{i'}$, $F=(\check{B}_1,\dots,\check{B'}_{i'},\dots,\check{B}_l)$ is still a member of 
$\mathcal{F}$, because $v_{m+1},\dots,v_n$ does not belong to $\check{B}_{s}$ for any $s \in \{1,\dots,l\}$, 
the path $(v_m,\dots,v_n)$ is disjoint from $\check{B}_{s}$ for any $s \in \{1,\dots,l\}\setminus \{i'\}$ and
creates no simple cycle in $\check'{B}_{i'}$.
This contradicts the assumption that $F_\mathrm{max} = (\check{B}_1,\dots,\check{B}_l)$ is a maximal element among $\mathcal{F}$.
\end{proof}

$F_\mathrm{max}$ covers every vertex and each of its connected components is either an out-tree or a bonsai on a simple cycle with even number of inhibiting edges.
Therefore we can create a fixed point $\hat{x}$ and a map $f$ in the same way as Proof of the proposition \ref{onlyif}.
\end{proof}

\section*{Acknowledgement}
I would like to express my gratitude for the meaningful discussions with Shun-ichi Azuma. Additionally, 
I appreciate Ryosuke Iritani for his contributions and support.

\end{document}